\theoremstyle{plain}
\newtheorem{theorem}{Theorem}[section]
\newtheorem*{theorem*}{Theorem}
\theoremstyle{definition}
\newtheorem{definition}[theorem]{Definition}
\newcommand{\enm}[1]{\ensuremath{#1}}          %
\newcommand{\cal}[1]{\mathcal{#1}}
\newcommand{\PP}{\enm{\mathbb{P}}}
\newcommand{\Ii}{\enm{\cal{I}}}
\newcommand{\Oo}{\enm{\cal{O}}}
\newcommand{\Uu}{\enm{\cal{U}}}
\renewcommand{\phi}{\varphi}
\renewcommand{\theta}{\vartheta}
\renewcommand{\epsilon}{\varepsilon}
\renewcommand{\to}[1][]{\xrightarrow{\ #1\ }}
\newcommand{\old}[1]{}
\begin{document}

\title[extensions of subvarieties]
{Extending infinitely many times arithmetically Cohen-Macaulay and Gorenstein subvarieties of projective spaces}
\author{E. Ballico}
\address{Dept. of Mathematics\\
 University of Trento\\
38123 Povo (TN), Italy}
\email{edoardo.ballico@unitn.it}
\thanks{The author was partially supported by MIUR and GNSAGA of INdAM (Italy).}
\subjclass[2010]{14N05}
\keywords{extensions of embedded varieties; cones; arithmetically Cohen-Macaulay varieties; arithmetically Gorenstein}

\begin{abstract}
We give examples of infinitely extendable (not as cones) arithmetically Cohen-Macaulay and arithmetically Gorenstein subvarieties of projective spaces and which are not complete intersections.
The proof uses the computation of the dimension of the Hilbert scheme of codimension $2$ subschemes of projective spaces due to G. Ellingsrud and of arithmetically Gorenstein codimension $3$ subschemes due to J. O. Kleppe and R.-M. Mir\'{o}-Roig.
\end{abstract}

\maketitle

\section{Introduction}
We recall the following classical definition (classical at least since \cite{bvdv,sw,t,XXX,z1,z2}).

\begin{definition}\label{a1}
Let $X\subset \PP^n$ be an integral and non-degenerate variety. See $\PP^n$ as a hyperplane of $\PP^{n+1}$. We say that $X$ is extendable if there is an integral and non-degenerate variety $X_1\subset \PP^{n+1}$ such that $X_1\cap H =X$ (scheme-theoretic intersection) and $X_1$ is not a cone with base $X$ and vertex  $p\in \PP^{n+1}\setminus H$. In this case we say that $X_1$ is an extension or an $1$-extension of $X$. Fix an integer $r\ge 2$. We say that $X$ is $r$-extendable if there are integral and non-degenerate varieties $X_i\subset \PP^{n+i}$, $1\le i\le r$, with $X_1$ an extension of $X$ and $X_i$, $2\le i\le r$, an extension of $X_{i-1}$. We say that $X$ is infinitely extendable if it is $r$-extendable for all positive integers $r$.
If $X$ is smooth (resp. locally a complete intersection) we say that $X$ is smoothly $r$-extendable (resp. $r$-extendable as locally complete intersection) if it we may take smooth (resp. locally a complete intersection) the varieties $X_1,\dots ,X_r$ appearing in the definition of $r$-extendability.
\end{definition}

Complete intersections are infinitely extendable (just extends their minimal equations) and smooth complete intersections are infinitely smoothly extendable.

Let $r$ be any positive integer. A vector bundle $E$ on $\PP^n$ is said to be $r$-extendable if there are a degree one embedding $j: \PP^n\hookrightarrow \PP^{n+r}$ and a vector bundle $F$ on $\PP^{n+r}$ such that $E \cong j^\ast (F)$. $E$ is said to be infinitely extendable if it is $r$-extendable for all $r$. A. N. Tyurin proved that a finite rank vector bundle on $\PP^n$ is infinitely extendable if and only if it is a direct sum of line bundles (\cite{coa1,coa2,ct,s1,s2,t}) and his result was extended to other varieties instead of a projective space (\cite{pt}). Results of this type  are called Babylonian towers. Zero-loci with codimension $a$ of rank $a$ vector bundles on $\PP^n$ are locally a complete intersections. H. Flenner proved that locally complete intersections which are infinitely extendable as locally complete intersections are complete intersections (\cite[Theorem 2]{coa2}, \cite{f}). The non-existence (except complete intersections) of infinitely extendable locally complete intersection (except global complete intersection) easily implies the corresponding result for vector bundles.

\begin{theorem}\label{aa1}
Fix integers $n\ge x+2 \ge 4$. There are infinitely extendable arithmetically Cohen-Macaulay integral schemes $X\subset \PP^n$ with $X$ of codimension $x$ and not a complete intersection. For $x\ge 3$ there is $X$ which is arithmetically Gorenstein.
\end{theorem}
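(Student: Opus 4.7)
The plan is to build the chain of extensions iteratively by exhibiting, at each step, a well-behaved Hilbert scheme component whose hyperplane-restriction map dominates the corresponding component one projective dimension lower, with fibres strictly larger than the family of cones. I will describe the codimension $2$ arithmetically Cohen-Macaulay case in detail; the arithmetically Gorenstein codimension $3$ case runs in parallel.

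First fix a Hilbert-Burch numerical type $h$ (a $t\times(t+1)$ matrix of fixed degrees with $t\ge 2$, chosen so that the associated codimension $2$ ACM subschemes are not complete intersections). For every $m\ge n$, let $H_m^h$ be the Hilbert scheme of codimension $2$ ACM subschemes of $\PP^m$ with numerical type $h$. By Ellingsrud's theorem, $H_m^h$ is smooth and irreducible of explicit dimension $D(m)$, and a general point parametrises an integral subscheme. Fix a hyperplane $H\subset\PP^{m+1}$ and consider the rational hyperplane-restriction map $\rho_m\colon H_{m+1}^h\dashrightarrow H_m^h$, defined on the open locus where the section preserves the numerical type. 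Given $Y\in H_m^h$, every cone on $Y$ with vertex in $\PP^{m+1}\setminus H$ lies in $\rho_m^{-1}(Y)$ and these cones form a family of dimension exactly $m+1$.

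The crux is the dimension inequality $D(m+1)-D(m)>m+1$ for all $m\ge n$, extracted from Ellingsrud's explicit formula. Granted it, upper semicontinuity of fibre dimension gives $\dim\rho_m^{-1}(Y)\ge D(m+1)-D(m)>m+1$, so $\rho_m^{-1}(Y)$ strictly contains the cone locus; a generic element $Y_1\in\rho_m^{-1}(Y)$ is thus a non-cone, integral extension of $Y$ (integrality being an open condition inside the irreducible scheme $H_{m+1}^h$). Iterating from an integral non-CI starting point $X\in H_n^h$ produces the required infinite chain of ACM, integral, non-cone, non-CI extensions. For the arithmetically Gorenstein codimension $3$ case one proceeds identically, using a Buchsbaum-Eisenbud Pfaffian numerical type and the smoothness, irreducibility, and dimension formula of Kleppe and Mir\'o-Roig for the Hilbert scheme of AG codimension $3$ subschemes of $\PP^m$ in place of Ellingsrud's; since AG implies ACM, this also handles the codimension $3$ ACM statement.

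The main obstacle is the inequality $D(m+1)-D(m)>m+1$. Asymptotically it is automatic: for non-CI numerical types $D(m)$ is a polynomial in $m$ of degree at least $2$ while the right-hand side is only linear; the real work is verifying the inequality uniformly from $m=n$ onwards, which dictates a careful choice of the numerical data and may require separate treatment of the boundary cases ($n=x+2$ and small $t$). A secondary technical point is ensuring that $\rho_m^{-1}(Y)$ meets the integral locus of $H_{m+1}^h$; this can be done by a Bertini-type argument on the family of Hilbert-Burch (respectively Pfaffian) matrices, or by exhibiting an explicit integral lift of the matrix defining $Y$ over one additional variable.
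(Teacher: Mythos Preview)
Your strategy for codimensions $x=2$ and $x=3$ is essentially the paper's: use Ellingsrud (resp.\ Kleppe--Mir\'o-Roig) to get an irreducible Hilbert-scheme component of known dimension, use cones to see that the hyperplane-section map is surjective, and compare the dimension jump $D(m+1)-D(m)$ with the $(m+1)$-dimensional cone locus in each fibre. The paper carries this out with explicit numerical types (e.g.\ $s\ge 3$, $n_{1i}=(s-1)c$, $n_{2j}=sc$ in codimension~$2$; a specific Pfaffian type with $r=7$ in codimension~$3$) and verifies both the inequality and the integrality of the starting member directly, whereas you leave these as acknowledged obstacles; that is acceptable for an outline but is not yet a proof.

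The genuine gap is that you do not address $x\ge 4$ at all. There is no Hilbert--Burch or Buchsbaum--Eisenbud structure theorem in higher codimension, and hence no analogue of the Ellingsrud or Kleppe--Mir\'o-Roig irreducibility and dimension results to feed into your scheme; ``proceeding identically'' is simply not available. The paper handles $x\ge 4$ by a separate geometric construction: it takes the infinitely extendable codimension-$3$ arithmetically Gorenstein tower $(Y_i)_{i\ge 0}$ produced in the previous step and intersects each $Y_i$ with $x-3$ compatibly chosen general (smooth) quadric hypersurfaces $Q_1(i),\dots,Q_{x-3}(i)$ satisfying $Q_j(i)\cap H=Q_j(i-1)$. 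The resulting $X_i=Y_i\cap Q_1(i)\cap\cdots\cap Q_{x-3}(i)$ are integral by Bertini, arithmetically Gorenstein of codimension $x$, and not complete intersections; the non-cone check at each step uses that a cone over $X_{i-1}$ would force every quadric through $X_i$ (in particular the smooth $Q_1(i)$) to be singular at the vertex. Without an argument of this kind, your proposal only covers $x\in\{2,3\}$.

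One minor correction: the lower bound $\dim\rho_m^{-1}(Y)\ge D(m+1)-D(m)$ is Chevalley's fibre-dimension theorem for a surjective morphism of irreducible varieties, not upper semicontinuity.
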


A codimension $2$ arithmetically Cohen-Macaulay scheme $X\subset \PP^n$ is Gorenstein if and only if it is a complete intersection (\cite[Example 4.1.11({c})]{mig}). Thus we cannot require that $X$ is Gorenstein for $x=2$ and exclude the complete intersections.  

There are classical non-extendability results for varieties of degree $3$ (\cite{XXX}) and degree $4$ (\cite{sw}), which come from a complete classification of all such low degree varieties. With some assumptions on the singularities  there are classifications for degree $5$ (\cite{ti}) and up to degree $10$ (\cite{fl}). No such results should be true in general, without also restricting the singularities of the extensions, not just of the variety. Very interesting non-extension results use the normal bundle $N_X$ and $h^0(N_X(-1))-n-1$ is conjecturally an upper bound for the number of extensions, as parenthetically asked in \cite[middle of the first page]{cds}, which would extend a non-extendability theorem of S. Lvovsky (\cite{lv}), but we think that it is essential to assume something also on the singularities of the extensions. By \cite{f} or \cite[Theorem 2]{coa2}  it is not sufficient to assume that the singularities are locally complete intersections.
\section{The proof}
\begin{proof}[Proof of Theorem \ref{aa1}:] Any extension of an arithmetically Cohen-Macaulay scheme of positive dimension is arithmetically Cohen-Macaulay and hence all its extensions are arithmetically Cohen-Macaulay. We start with an integral arithmetically Cohen-Macaulay $X$ which is not a cone. We will extend
it to $W\subset \PP^{n+1}$ with $W$ integral and not a cone.  $W$ would be arithmetically Cohen-Macaulay and hence the construction may be iterated starting with $W$. After $r$ steps we get that $X$ is $r$-extendable. Since this is true for all $r>0$, $X$ is infinitely extendable in the sense of our Definition \ref{a1}. For arithmetically Cohen-Macaulay schemes to be a complete intersection is a property of their minimal free resolution. Thus an extension $Y_1$ of an arithmetically Cohen-Macaulay scheme $Y$ of positive dimension is a complete intersection if and only if $Y$ is a complete intersection. Thus in each step we may omit any check that the extension is not a complete intersection.

\quad (a) Assume $x=2$ with $n\ge 3$. Fix positive integers $s\ge 2$, $n_{1i}$, $1\le i\le s$, and $n_{2i}$, $1\le i\le s-1$, such that $\sum _{i=1}^{s-1} n_{2i} = \sum _{i=1}^{s} n_{1i}$. We call $\{n_{ij}\}$ these data $s$, $n_{1i}$ and $n_{ij}$. Let $\Uu_n(n_{ij})$ denote the set of all codimension $2$ arithmetically Cohen-Macaulay schemes with free resolution
\begin{equation}\label{eqp1}
0 \to \oplus _{i=1}^{s-1} \Oo _{\PP^n}(-n_{2i}) \to \oplus _{i=1}^{s} \Oo_{\PP^n}(-n_{1i}) \to \Ii _X\to 0
\end{equation}
By \cite[Th. 2]{e} the set $\Uu_n(n_{ij})$ is a non-empty open and irreducible subset of the Hilbert scheme $\mathrm{Hilb}(\PP^n)$ of $\PP^n$ of dimension
\begin{align}\label{eqp2}
&1+ \sum_{n_{2i}\ge n_{1j}} \binom{n_{2i}-n_{1j}+n}{n} +\sum _{n_{1j}\ge n_{2i}} \binom{n_{1j}-n_{2i}+n}{n} \notag \\
&  -\sum _{n_{2i}\ge n_{2j}}\binom{n_{2i}-n_{2j}+n}{n} -\sum_{n_{1j}\ge n_{1i}} \binom{n_{1j}-n_{1i}+n}{n}
\end{align}

For instance the case $s=2$ corresponds to complete intersection codimension $2$ schemes.  
 Call $\psi _n(n_{ij})$ the integer appearing in \eqref{eqp2}. For instance
 $\psi_n(n_{ij}) =1+6(n+1)-4-9 =6n-6$ for $s=3$, $n_{11}=n_{12}=n_{13}=2$ and $n_{21}=n_{22} =3$, the case corresponding to codimension $2$ degree $3$ varieties
 (fully classified by \cite{XXX} and which are always cones for $n\ge 5$).

 See $\PP^n$ as a hyperplane $H$ of $\PP^{n+1}$. 
Let $\Uu _{n+1,H}(n_{ij})$ the set of all $W\in \Uu_{n+1}(n_{ij})$ such that no irreducible component of $W_{\mathrm{red}}$ is
contained in
$H$. Note that $W\cap H\in \Uu _n(n_{ij})$ and that $W$ is an extension of $W\cap H$ for each $W\in \Uu _{n+1,H}(n_{ij})$. Fix any $X\subset H$ such that $X\in
\Uu_n(n_{ij})$. Let $\rho: \Uu _{n+1,H}(n_{ij})\to  \Uu _n(n_{ij})$ be the morphism defined by the formula $W\mapsto W\cap H$. There are exactly $\infty^{n+1}$ $W\in \Uu _{n+1,H}(n_{ij})$ such that $W\cap H = X$ and $W$ is a cone with
vertex containing some $p\in \PP^{n+1}\setminus H$. Thus $\rho$ is a surjective morphism between integral varieties. Thus if $W$ is general in $\Uu _{n+1}(n_{ij})$, then $W\in
\Uu_{n+1,H}(n_{ij})$ and $W\cap H$ is general in $\Uu_n(n_{ij})$. Recall that for each $X\in \Uu_n(n_{ij})$ the set of cones belonging to $\rho^{-1}(X)$ has dimension $n+1$. Thus to prove that a general $X\in \Uu_n(n_{ij})$ may be
extended to a general $W\in \Uu_{n+1}(n_{ij})$ which is not a cone with vertex contains some $p\in \PP^{n+1}\setminus H$ it
is sufficient to prove that $\psi_{n+1}(n_{ij}) \ge \psi _n(n_{ij})+n+2$. If we do this for all large $n$ then we get an example
for the case codimension $2$ of Theorem \ref{aa1}. We only do one example of $s$ and $n_{ij}$, which satisfies these properties
for all $n\ge 3$.

 Fix integers $s\ge 3$ and an integer $c\ge 2$. Set $n_{1i}:= (s-1)c$ for all $i$ and $n_{2j}:= sc$ for all $j$.
We have $\psi _n(n_{ij}) =s(s-1)\binom{x+n}{n} -(s-1)^2 -s^2+1$. Thus $\psi_{n+1}(n_{ij}) -\psi _n(n_{ij}) = s(s-1)\binom{x+n}{n+1} \ge s(s-1)(n+2)/2$. For $n=3$ this case corresponds to an integral curve because its numerical character is connected (\cite[Th. 2.5]{gp}) and it is even smooth (\cite{sau}). Hence inductively we get an integral $X$, as claimed by the theorem.

\quad (b) Now we do the codimension $3$ case. In this part we get $X$ which is arithmetically Gorenstein. We use a paper by J.
O. Kleppe and R.-M.  Mir\'{o}-Roig (\cite{kmr}), which of course use the classical Buchsbaum-Eisenbud's description of codimension $3$ Gorenstein local rings (\cite{be}).
At the end of \cite[\S 1]{kmr} they introduce the integers appearing in their computation of the dimension of the Hilbert scheme of arithmetically Gorenstein codimension $3$
subschemes of $\PP^n$, $n\ge 4$, with prescribed numerical invariants $f$ (or $e$ with $f=e+n+1$), $r\ge 3$, $n_{2i}$ and $n_{1j}$ with $n_{11}\le \cdots \le n_{1r}$, $n_{21}\ge \cdots \ge n_{2r}$ with $n_{1i} $ for all $i$. The dimension of this iirreducible component of the Hilbert scheme is given in \cite[Remark 2.8]{kmr}. Call $\phi _n(n_{ij})$ this dimension. By \cite[Remark 2.8]{kmr} we have
\begin{align}
& \phi _n(n_{ij}) = \sum _{1\le i<j\le r} \binom{-n_{1i}+n_{2j}+n}{n}\\
&-\sum _{1\le i, j\le r}\binom{-n_{1i}+n_{1j}+n}{n} + \sum _{1\le i\le j\le r} \binom{n_{1i}-n_{2j}+n}{n} \notag
\end{align}
We only give one case, the case with $r=7$, $n_{11}=n_{12}=n_{13}=n_{14}=n_{15} =4$, $n_{16}=n_{17}=n_{26}=n_{27} =5$ and $n_{21}=n_{22}=n_{23}=n_{24}=n_{25}=6$. The big advantage of this case is that we do not need to check the existence of an integral $X$, since in the starting case, the case $n=4$, we may take $X$ smooth and irreducible (\cite[Example 2.9]{kmr}.
We have 
\begin{align}
&\phi _n(n_{ij}) = 10\binom{-4+6+n}{n} +10\binom{-4+5+n}{n} -25 -4\notag\\
&-10\binom{-4+5+n}{n}+3 = 10\binom{n+2}{2}  -26 \notag
\end{align}
Thus $\phi_{n+1}(n_{ij})-\phi _n(n_{ij}) = 10(n+2)>n+2$.

\quad ({c}) Take any $x\ge 4$ and any $n \ge x+1$. By step (b) there are integral, non-degenerate, arithmetically Cohen-Macaulay and Gorenstein varieties $Y_i\subset \PP^{n+i}$, $i\ge 0$, such that each $Y_i$, $i>0$, is an extension (not cone-like) of $Y_{i-1}$ and no $Y_i$ is a complete intersection. Fix general quadric hypersurfaces $Q_1(0),\dots ,Q_{x-3}(0)\subset \PP^n$. Let $Q_j({r}) \subset \PP^{n+r}$ be an $r$-step extension of $Q_j(0)$, $1\le j\le x-3$. Since $Q_j(0)$ is general, it is smooth. Thus we may find as extensions smooth quadric hypersurfaces. Set $X_i:= Y_i\cap Q_1(i)\cap \cdots \cap Q_{x-3}(i)$ and $X:= X_0$.
Each $X_i\subset \PP^{n+i}$ is non-degenerate, integral (by Bertini's theorem) and  arithmetically Gorenstein. Since a minimal system of generators of the homogeneous ideal of $X_i$ is obtained adding $Q_1(i),\dots ,Q_{x-3}(i)$ to a minimal system of generators of $Y_i$, $X_i$ is not a complete extension. Fix an integer $i>0$ and let $H\subset \PP^{n+i}$ be the hyperplane generated by $Y_{i-1}$. $H$ is the hyperplane generated by $X_{i-1}$. Assume the existence of $p\in \PP^{n+i}\setminus H$ such that
$X_i$ is a cone with vertex $p$ and $X_{i-1}$ as a basis. Since $X_{i-1}$ spans $H$, $\PP^{n+i}$ is the Zariski tangent space to $X_i$ at $p$. Thus each quadric hypersurface containing $X_i$ is a cone with vertex $p$. Thus $Q_1(i)$ is a cone, a contradiction.
\end{proof}


\bibliographystyle{amsplain}
\providecommand{\bysame}{\leavevmode\hbox to3em{\hrulefill}\thinspace}
\providecommand{\MR}{\relax\ifhmode\unskip\space\fi MR }
\providecommand{\MRhref}[2]{%
  \href{http://www.ams.org/mathscinet-getitem?mr=#1}{#2}
}
\providecommand{\href}[2]{#2}
\providecommand{\bysame}{\leavevmode\hbox to3em{\hrulefill}\thinspace}

\end{document}